\newcommand{\ff}{\mathbb{F}}
\newcommand{\Cay}{\mathrm{Cay}}
\newcommand{\G}{\Gamma}
\newcommand{\B}{\mathcal{B}}
\newtheorem{thm}{Theorem}[section]
\newtheorem{prop}[thm]{Proposition}
\newtheorem{coro}[thm]{Corollary}
\theoremstyle{definition}
\newtheorem{rem}[thm]{Remark}
\newtheorem{exam}[thm]{Example}
\theoremstyle{remark}
\begin{document}
\numberwithin{equation}{section}
\title[On diagonal equations over finite fields via walks in NEPS of graphs]{On diagonal equations over finite fields \\ via walks in NEPS of graphs}
\author[ D.E.\@ Videla]{ Denis E.\@ Videla}
\dedicatory{\today}
\keywords{NEPS, Hamming graphs, diagonal equations, finite fields}
\thanks{2010 {\it Mathematics Subject Classification.} Primary 11G25 ;\, Secondary 05C25,05C50, 11T99.}
\thanks{Partially supported by CONICET and SECyT-UNC}

\email{devidela@famaf.unc.edu.ar}

\address{Denis Videla -- CIEM - CONICET, FaMAF, Universidad Nacional de C\'ordoba, (5000) C\'ordoba, Argentina. {\it E-mail: devidela@famaf.unc.edu.ar}}

\begin{abstract}
In this paper, we obtain an explicit combinatorial formula for the number of solutions $(x_1,\ldots,x_r)\in \ff_{p^{ab}}$ to the diagonal equation $x_{1}^k+\cdots+x_{r}^k=\alpha$ over the finite field $\ff_{p^{ab}}$, with $k=\frac{p^{ab}-1}{b(p^a-1)}$ and $b>1$ by using  the number of $r$-walks in NEPS of complete graphs.
\end{abstract}

\maketitle

\section{Introduction}

\subsubsection*{Diagonal equations}

A diagonal equation over the finite field $\ff_{p^m}$ is an equation of the form
\begin{equation}\label{diag gen}
\alpha_1x_1^{k_1}+\cdots+ \alpha_s x_s^{k_s}=\alpha
\end{equation}
for $\alpha_i\in \ff_{p^m}^*$, $\alpha \in \ff_{p^m}$ for $i=1,\ldots,s$. This kind of equation have been study a lot, 
the interested reader can see the pioneering work \cite{W}, who relates the number of solution in terms of Gauss sums.
Other authors have used this Weil's expression to obtain explicit number of solution for especific $\alpha_i$'s and $k_i$'s, see \cite{B1}, \cite{B2} \cite{BEW}, \cite{S}, \cite{SY}, \cite{W1} \cite{W2}.

In general, it is difficult to find the explicit number of solution of \eqref{diag gen},
in this work we are going to find an explicit combinatorial solution of \eqref{diag gen} when $\alpha_i=1$ and $k_i = k =\frac{p^{ab}-1}{b(p^a-1)}$  for all $i$ in the finite fields $\ff_{p^{ab}}$ 
by using a relation between the number of solution of \eqref{diag gen} and the walks of certain graphs which have a special product structure (NEPS). 

\subsubsection*{NEPS operation}
Given a set $\B\subseteq\{0,1\}^n$ and graphs $G_1,\ldots,G_n$, the NEPS (non-complete extended psum) of these graphs with respect to the basis $\B$ is the graph $G=\mathrm{NEPS}(G_1,\ldots,G_n; \B)$, whose vertex
set is the cartesian product of the vertex sets of the individual graphs, $V(G)=V(G_1)\times\cdots\times V(G_n)$ and two vertices $(x_1,\ldots,x_n), (y_1,\ldots,y_n)\in V(G)$ are adjacent in $G$,
if and only if there exists some $n$-tuple $(\alpha_1,\ldots,\alpha_n)\in \B$ such that $x_i = y_i$, whenever $\alpha_i=0$, or $x_i, y_i$ are distinct and adjacent in $G_i$, whenever $\alpha_i=1$.

The NEPS operation generalizes a number of known graph products, all of which have
in common that the vertex set of the resulting graph is the cartesian product of the input vertex sets. For instance, $\mathrm{NEPS}(G_1,\ldots,G_n; \{(1,\ldots,1)\})=G_1\otimes\cdots\otimes G_n$ is the \textit{Kronecker product} of the $G_i$'s; $\mathrm{NEPS}(G_1,\ldots,G_n; \{e_1,\ldots,e_n\})=G_1+\cdots+G_n$ (where $e_i$ is the vector which only $1$ in the position $i$) is the \textit{sum} of the graphs $G_i$; $\mathrm{NEPS}(G_1,G_2; \{(1,1), (1,0), (0,1)\})=G_1\boxtimes G_2$ is the \textit{strong product} of  $G_1,G_2$. We refer to \cite{CDS} or \cite{Cv} for the history of the notion of NEPS.

\subsubsection*{Outline and results}

The main goal of this paper is find the number of solution of the diagonal equation 
\begin{equation}\label{diag1}
x_1^k+x_2^k+\cdots+x_s^k=\alpha
\end{equation} 
over finite fields in a combinatorial way.
For this purpose, we relate the number of solution of \eqref{diag1} with the number of walks 
generalized Paley graphs $\G(k,p^m)=\Cay(\ff_{q},R_k)$ (GP-graph for short). 
By using a classification of GP-graphs which are NEPS of complete graph due to Lim and Praeger (see \cite{LP}), 
the problem of find the number of solution of \eqref{diag1} turns on to calculate the number of walks of NEPS of complete graphs. 
We will calculate a closed formula for the number of walks in NEPS of complete graphs.

The paper is organized as follows. 
In section 2, we recall some basic definition of generalized Paley graphs and diagonal equations over finite fields, and will be obtain a direct relationship between the number of $r$-walks from $x$ to $y$ and the number of solution of \eqref{diag1} with $\alpha=y-x$, in this case.

In Section 3, we find a closed formula for the number of $r$-walks in NEPS in terms of the number of walks of its factors by using essencially the properties of Kronecker products of matrices and well-known facts about the power of matrices and the number of walks between two vertices. 

In Section 4 we apply the formula  for the number of walks to 
the case of the cartesian product of the same complete graph which is $\mathrm{NEPS}(K_{n},\ldots,K_{n};\mathcal{B})$ with $\mathcal{B}$ the cannonical basis. 
In this case, this graph is the well-known \textit{Hamming graph}. 
In \cite{LP}, the authors characterized those generalized Paley graphs which are Hamming graph. 
Using this, we find the $r$-walks between two vertices in generalized Paley graphs and thus by aplying the result of section 2 we obtain a formula for the number of solution of the diagonal equation \eqref{sumspower} over $\ff_{p^m}$ for $k=\frac{p^{ab}-1}{p^a-1}$ where $m=ab$. 

\section{GP-graphs and diagonal equation over finite fields}

Let $p$ be a prime and let $m,k$ be positive integers such that $k\mid p^m-1$. The generalized Paley graph is the Cayley graph
\begin{equation}\label{Gammas}%
\G(k,p^m)=\Cay(\ff_{p^m},R_{k})\quad \text{where } R_{k}=\{x^{k}:x\in\ff_{p^m}^*\},
\end{equation} 
i.e. $\G(k,p^m)$ is the graph with set of vertex $\ff_{p^m}$ and two vertices $x,y \in \ff_{p^m}$ are neighbors if and only if the difference $y-x\in R_{k}$. In general $\G(k,p^m)$ is a directed graph, but if $R_{k}$ is symmetric ($R_{k}=-R_{k}$), then $\G(k,p^m)$ is a simple graph. 

Notice that if $\omega$ is a primitive element of $\ff_{p^m}$, then $R_{k}=\langle\omega^{k}\rangle$, this implies that $\G(k,p^m)$ is a $(\frac{p^m-1}{k})$-regular graph.  
We assume that $u=\frac{p^{m}-1}{k}$ is a primitive divisor of $p^m-1$ (i.e. $u$ does not divide $p^{h}-1$ for any $h<m$) and $u$ even if $p$ is odd. The first condition is equivalent to $\G(k,p^m)$ being a connected graph and the second one is equivalent to $\G(k,p^m)$ being a simple graph if $p$ is odd. Notice that if $p=2$ then $\G(k,p^m)$ is a simple graph (without using this condition). 

Given a graph $G$ and $v_i,v_j$ vertices of $G$, we denote by $w_{G}(r,v_i,v_j)$ to the number of walks of length $r$ from $v_i$ to $v_j$ in $G$. By convention, $w_{G}(0,v_i,v_j)=0$ or $1$ if $v_i\neq v_j$ or $v_i= v_j$, respectively. 
The following Theorem relates the number of solution of the diagonal equations with $\alpha_i=1$ and $k_i=k$ for all $i=1,\ldots,s$ over $\ff_{p^m}$ and the walks of the GP-graph $\G(k,p^m)$.

\begin{thm}\label{walksolthm}
	Let $p$ be a prime and let $k,m$ be positive integers such that $k\mid p^m-1$.
	Given $x,y \in \G(k,p^m)$, then  
	\begin{equation}\label{walksol}
	w_{\G(k,p^m)}(s,x,y)=\frac{1}{k^{s}}\#\{(x_1,\ldots,x_s)\in (\ff_{p^m}^*)^s: x_1^{k}+\cdots+x_s^{k}=y-x\}.
	\end{equation}
\end{thm}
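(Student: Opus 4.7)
The plan is to unwind the definition of a walk in the Cayley graph and use the fact that the $k$-th power map on $\mathbb{F}_{p^m}^*$ is exactly $k$-to-$1$ on its image $R_k$.

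\textbf{Step 1: Rewrite walks as sums of connection-set elements.}
By definition of $\Gamma(k,p^m) = \Cay(\ff_{p^m}, R_k)$, a walk of length $s$ from $x$ to $y$ is a sequence $x = v_0, v_1, \dots, v_s = y$ such that each ``step'' $y_i := v_i - v_{i-1}$ lies in $R_k$. The assignment $(v_0, \dots, v_s) \mapsto (y_1, \dots, y_s)$ (with $v_0 = x$ fixed) is a bijection between walks from $x$ to $y$ of length $s$ and tuples
\[
(y_1, \dots, y_s) \in R_k^{\,s} \quad \text{with} \quad y_1 + \dots + y_s = y - x,
\]
since $v_i = x + y_1 + \dots + y_i$ recovers the walk. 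Hence $w_{\Gamma(k,p^m)}(s,x,y) = \#\{(y_1,\ldots,y_s) \in R_k^s : \sum y_i = y-x\}$.

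\textbf{Step 2: Lift to $k$-th powers via the fiber count.}
Since $k \mid p^m - 1$ and $\ff_{p^m}^*$ is cyclic of order $p^m - 1$, the homomorphism $\varphi \colon \ff_{p^m}^* \to \ff_{p^m}^*$, $\varphi(z) = z^k$, has kernel of size $k$ (the $k$-th roots of unity in $\ff_{p^m}^*$), so its image is exactly $R_k$ and every fiber $\varphi^{-1}(y_i)$ for $y_i \in R_k$ has cardinality $k$. Consequently the product map
\[
\Phi \colon (\ff_{p^m}^*)^s \longrightarrow R_k^{\,s}, \qquad \Phi(x_1,\ldots,x_s) = (x_1^k,\ldots,x_s^k),
\]
is surjective and exactly $k^s$-to-$1$.

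\textbf{Step 3: Combine.}
Under $\Phi$, solutions $(x_1,\ldots,x_s) \in (\ff_{p^m}^*)^s$ of $x_1^k + \cdots + x_s^k = y-x$ correspond $k^s$-to-$1$ to tuples $(y_1,\ldots,y_s) \in R_k^s$ with $y_1 + \cdots + y_s = y-x$ (note $0 \notin R_k$, so each $x_i$ is automatically nonzero and nothing degenerates). Combining with Step 1,
\[
\#\{(x_1,\ldots,x_s) \in (\ff_{p^m}^*)^s : x_1^k + \cdots + x_s^k = y-x\} = k^s \cdot w_{\Gamma(k,p^m)}(s,x,y),
\]
which is the claimed identity after dividing by $k^s$.

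There is no real obstacle; the only point requiring a line of justification is the fiber-size computation in Step 2, which is standard once one invokes the hypothesis $k \mid p^m - 1$ and the cyclicity of $\ff_{p^m}^*$. The rest is a bookkeeping bijection between walks and step-sequences, which works verbatim for any Cayley graph.
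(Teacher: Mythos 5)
Your proof is correct and follows essentially the same route as the paper's: identify $s$-walks with step-sequences $(y_1,\ldots,y_s)\in R_k^s$ summing to $y-x$, then observe that the $k$-th power map $\ff_{p^m}^*\to R_k$ is exactly $k$-to-$1$ (since $k\mid p^m-1$), so the correspondence between solutions and walks is $k^s$-to-$1$. Your write-up merely makes the bijection and fiber count more explicit than the paper does; no substantive difference.
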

\begin{proof}
	If $x,y\in \ff_{p^m}$, then an $s$-walk from $x$ to $y$  in $\G(k,p^m)$ gives $x_{1},\ldots,x_{s}\in \ff_{p^m}^*$ such that 
	\begin{equation}\label{sumspower}
	x+x_1^{k}+\cdots+x_s^{k}=y.
	\end{equation} 
	Notice that, given $x\in \ff_{p^m}^*$ there are exactly $k$ elements $y\in \ff_{p^m}^*$ such that $x^k=y^k$.
	So, each walk induces $k^{s}$-solutions satisfying \eqref{sumspower}. 
	
	Reciprocally, any solution $(x_1,\ldots,x_s)\in (\ff_{p^m}^*)^s$ of \eqref{sumspower} defines an $s$-walk from $x$ to $y$ in $\G(k,p^m)$, by taking into account that there are $k$ elements $y\in \ff_{p^m}^*$ such that $x^k=y^k$ for each $x\in \ff_{p^m}^*$. Thus, there are $k^s$ different solutions of \eqref{sumspower} which induce the same walk. Therefore 
	$$w_{\G(k,p^m)}(r,x,y)=\tfrac{1}{k^s}\#\{(x_1,\ldots,x_s)\in (\ff_{p^m}^*)^s: x+x_1^{k}+\cdots+x_s^{k}=y\},$$ 
	as desired.
\end{proof}
\begin{rem}\label{rem imp}
	Notice that the equation \eqref{walksol}, allow us to obtain the number $M$ of solution of \eqref{sumspower} in $\ff_{p^m}^s$, 
	by taking into account that
	$$M=\sum_{i=1}^{s} \tbinom{s}{i} N_i \qquad\text{if } \quad x\ne y.$$
	where $N_i$ denotes the number of solution $(x_1,\ldots,x_i) \in (\ff_{p^m}^*)^i$ of $x+x_1^{k}+\cdots+x_i^{k}=y$. 
	In the case $x=y$, notice that we have the trivial solution $x_v=0$ for each $v=1,\ldots,s$, thus we obtain that
	$$M=1+\sum_{i=1}^{s} \tbinom{s}{i} N_i.$$
\end{rem}

\section{Number of walks in NEPS}

It is well-known that if $A(G)$ is the adjacency matrix of a graph $G$, then 
\begin{equation}\label{Adjwalks}
w_{G}(r,v_i,v_j)=(A(G)^{r})_{i,j}
\end{equation}
 labeling the vertices in an appropriate way. 
 
The adjacency matrix of $\mathrm{NEPS}(G_1,\ldots,G_n;\B)$ can be calculated in terms of the adjacency matrices of the graphs $G_1,\ldots,G_n$. More precisely, if $G=\mathrm{NEPS}(G_1,\ldots,G_n;\B)$ and the graphs $G_1,\ldots,G_n$ have adjacency matrices $A_{1},\ldots,A_n$, then the adjacency matrix of $G$ is given by 
 \begin{equation}\label{adj NEPS}
 	A=\sum_{\alpha\in \B}A_{1}^{\alpha_1}\otimes \cdots \otimes A_{n}^{\alpha_n},
 \end{equation}
 where $\otimes$ denotes the Kronecker matrix product and $\alpha=(\alpha_1,\ldots,\alpha_n)$ (see \cite{CDS}).

 \begin{thm}\label{walksNEPS}
 	If $G=\mathrm{NEPS}(G_1,\ldots,G_n,\B)$ and if $v_{i},v_j \in V(G)$ then
 	\begin{equation}\label{numbNEPS}
 	w_{G}(r,v_i,v_j)=\sum_{(\beta_1,\ldots,\beta_{r})\in \B^{r}}\, \prod_{t=1}^{n} w_{G_t}(\beta_{1t}+\cdots+\beta_{rt},\pi_{t}(v_i),\pi_{t}(v_j)),
 	\end{equation}
 	where $\pi_t$ denotes the projection of $V(G)$ in $V(G_t)$ and $\beta_{\ell}=(\beta_{\ell 1},\beta_{\ell 2},\ldots,\beta_{\ell n})\in \B$ for all $\ell=1,\ldots,r$.
 \end{thm}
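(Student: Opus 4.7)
The strategy is to combine the walk-counting identity \eqref{Adjwalks} with the Kronecker decomposition \eqref{adj NEPS} of the adjacency matrix of a NEPS, and exploit the mixed-product property of $\otimes$.

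First, I would start from $w_G(r,v_i,v_j)=(A^r)_{v_i,v_j}$ where $A$ is given by \eqref{adj NEPS}. Expanding the $r$-th power of the sum gives
\[
A^r=\Bigl(\sum_{\alpha\in\B}A_1^{\alpha_1}\otimes\cdots\otimes A_n^{\alpha_n}\Bigr)^{\!r}
=\sum_{(\beta_1,\ldots,\beta_r)\in\B^r}\;\prod_{\ell=1}^{r}\bigl(A_1^{\beta_{\ell 1}}\otimes\cdots\otimes A_n^{\beta_{\ell n}}\bigr),
\]
where the product is taken in the matrix-algebra sense. Here I identify each $\beta_\ell$ with the vector $(\beta_{\ell 1},\ldots,\beta_{\ell n})\in\B$, so the multi-index runs through $\B^r$.

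The next step is to collapse each summand using the mixed-product property $(X_1\otimes\cdots\otimes X_n)(Y_1\otimes\cdots\otimes Y_n)=(X_1Y_1)\otimes\cdots\otimes(X_nY_n)$, applied $r-1$ times. This turns the product of Kronecker-tensor factors into a single Kronecker product whose $t$-th factor is $A_t^{\beta_{1t}+\cdots+\beta_{rt}}$, since the exponents of $A_t$ add along the product. Thus
\[
A^r=\sum_{(\beta_1,\ldots,\beta_r)\in\B^r}A_1^{\beta_{11}+\cdots+\beta_{r1}}\otimes\cdots\otimes A_n^{\beta_{1n}+\cdots+\beta_{rn}}.
\]

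Finally, I would read off the $(v_i,v_j)$-entry. After labelling $V(G)=V(G_1)\times\cdots\times V(G_n)$ so that each vertex is indexed by $(\pi_1(v),\ldots,\pi_n(v))$, the standard formula for entries of a Kronecker product gives
\[
(A_1^{s_1}\otimes\cdots\otimes A_n^{s_n})_{v_i,v_j}=\prod_{t=1}^{n}(A_t^{s_t})_{\pi_t(v_i),\pi_t(v_j)}=\prod_{t=1}^{n}w_{G_t}(s_t,\pi_t(v_i),\pi_t(v_j)),
\]
with $s_t=\beta_{1t}+\cdots+\beta_{rt}$, where the last equality is again \eqref{Adjwalks} applied to each factor $G_t$. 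Substituting and rearranging yields exactly \eqref{numbNEPS}.

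There is no real obstacle here; the only point that requires a bit of care is the iterated use of the mixed-product property, which implicitly demands that the ordering of factors in the tensor product matches the ordering of projections used to index the vertices of $G$. Making this identification explicit at the outset (by fixing the labelling of $V(G)$ as the cartesian product $V(G_1)\times\cdots\times V(G_n)$) ensures the last computation is routine.
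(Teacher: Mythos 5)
Your proof is correct and follows essentially the same route as the paper: expand $A^r$ from the Kronecker decomposition \eqref{adj NEPS}, collapse each summand via the mixed-product property so the exponents of each $A_t$ add up to $\beta_{1t}+\cdots+\beta_{rt}$, and read off the $(v_i,v_j)$-entry factor by factor using \eqref{Adjwalks}. Your remark about fixing the labelling of $V(G)$ as the cartesian product matches the paper's appeal to the lexicographic ordering of vertices.
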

 \begin{proof}
 Recall that the Kronecker product has the property 
 $$(A\otimes B)(C\otimes D)=AC\otimes BD.$$ 
 Thus, by \eqref{adj NEPS} if $A,A_1,\ldots,A_{n}$ are the adjacency matrices of $G,G_1,\ldots,G_n$, respectively, then 
 $$A^{r}= \Big(\sum_{\alpha\in \B}A_{1}^{\alpha_1}\otimes \cdots \otimes A_{n}^{\alpha_n}\Big)^r=\sum_{(\beta_1,\ldots,\beta_{r})\in \B^{r}}A_{1}^{\beta_{11}+\cdots+\beta_{r1}}\otimes \cdots \otimes A_{n}^{\beta_{1n}+\cdots+\beta_{rn}},$$
 where $\beta_{\ell}=(\beta_{\ell 1},\beta_{\ell 2},\ldots,\beta_{\ell n})\in \B$ for all $\ell=1,\ldots,r$ and thus
 \begin{equation}\label{power}
 (A^r)_{i,j}=\sum_{(\beta_1,\ldots,\beta_{r})\in \B^{r}} (A_{1}^{\beta_{11}+\cdots+\beta_{r1}}\otimes \cdots \otimes A_{n}^{\beta_{1n}+\cdots+\beta_{rn}})_{i,j}.
 \end{equation}
 Taking into account that $A$ is constructed with the lexicographic order for the vertices of $G$ which represent the ordered $n$-tuples of vertices of $G_1,\ldots,G_n$ (see \cite{CDS}). Denote by $o_{t}(i)$ the label of $\pi_{t}(v_i)$ in $G_t$  for $t=1,\ldots,n$. By definition of Kronecker product we have 
 $$(A_{1}^{\beta_{11}+\cdots+\beta_{r1}}\otimes \cdots \otimes A_{n}^{\beta_{1n}+\cdots+\beta_{rn}})_{i,j}=\prod_{t=1}^{n} (A_{t}^{\beta_{1t}+\cdots+\beta_{rt}})_{o_{t}(i),o_{t}(j)}$$
 and, by \eqref{Adjwalks} $$(A_{t}^{\beta_{1t}+\cdots+\beta_{rt}})_{o_{t}(i),o_{t}(j)}=w_{G_t}(\beta_{1t}+\cdots+\beta_{rt},\pi_{t}(v_i),\pi_{t}(v_j)).$$
Therefore, by \eqref{Adjwalks} and \eqref{power}, we obtain the desired formula.
 \end{proof}
 As a consequence we obtain a formula for the number of walks in NEPS of complete graphs.

\begin{coro}\label{walksNEPScomp}
	Let $G=NEPS(K_{m_1},\ldots,K_{m_n};\B)$. If $v_{i},v_j \in V(G)$ then
	\begin{equation}\label{numbNEPScomp}
	w_{G}(r,v_i,v_j)=\sum_{(\beta_1,\ldots,\beta_{r})\in \B^{r}}\, \prod_{t=1}^{n} a_{t}(v_i,v_j),
	\end{equation}
	where 
	$$a_{t}(v_{i},v_j)=\begin{cases}
	\frac{m_t-1}{m_t}\big((m_{t}-1)^{\beta_{1t}+\cdots+\beta_{rt}-1}-(-1)^{\beta_{1t}+\cdots+\beta_{rt}-1}\big) & \quad\text{if } \pi_{t}(v_i)=\pi_t(v_j), \\[2mm]
	\frac{1}{m_t}\big((m_{t}-1)^{\beta_{1t}+\cdots+\beta_{rt}}-(-1)^{\beta_{1t}+\cdots+\beta_{rt}}\big) & \quad \text{if } \pi_{t}(v_i)\neq\pi_t(v_j). 
	\end{cases}$$ 
\end{coro}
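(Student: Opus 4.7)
The plan is to apply Theorem \ref{walksNEPS} directly with $G_t=K_{m_t}$, so that all that remains is to compute $w_{K_m}(\ell,u,v)$ in closed form for arbitrary $\ell\geq 0$ and then substitute $\ell=\beta_{1t}+\cdots+\beta_{rt}$.

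To compute the number of $\ell$-walks in $K_m$, I would diagonalize its adjacency matrix $A=A(K_m)=J-I$, where $J$ is the all-ones matrix. Since $J$ has eigenvalue $m$ on the span of the all-ones vector and eigenvalue $0$ on its orthogonal complement, $A$ has eigenvalues $m-1$ (with eigenspace $\langle\mathbf{1}\rangle$) and $-1$ (with eigenspace $\mathbf{1}^\perp$). Writing $P=\tfrac{1}{m}J$ for the orthogonal projection onto $\langle\mathbf{1}\rangle$, we have the spectral decomposition $A=(m-1)P+(-1)(I-P)$, and therefore
\begin{equation*}
A^{\ell}=(m-1)^{\ell}P+(-1)^{\ell}(I-P)=(-1)^{\ell}I+\tfrac{(m-1)^{\ell}-(-1)^{\ell}}{m}J.
\end{equation*}
Reading off the $(u,v)$-entry via \eqref{Adjwalks} gives
\begin{equation*}
w_{K_m}(\ell,u,v)=
\begin{cases}
\tfrac{1}{m}\bigl((m-1)^{\ell}-(-1)^{\ell}\bigr)+(-1)^{\ell}, & u=v,\\[1mm]
\tfrac{1}{m}\bigl((m-1)^{\ell}-(-1)^{\ell}\bigr), & u\neq v.
\end{cases}
\end{equation*}
A short algebraic simplification of the $u=v$ case, factoring $-1$ out of $(-1)^{\ell}=-(-1)^{\ell-1}$ and $(m-1)^{\ell}=(m-1)(m-1)^{\ell-1}$, rewrites it as $\tfrac{m-1}{m}\bigl((m-1)^{\ell-1}-(-1)^{\ell-1}\bigr)$, matching the first branch of $a_t(v_i,v_j)$ in the statement.

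Finally, I would plug these values into \eqref{numbNEPS} of Theorem \ref{walksNEPS}, with $\ell=\beta_{1t}+\cdots+\beta_{rt}$ and $u=\pi_t(v_i)$, $v=\pi_t(v_j)$; the sum over $(\beta_1,\ldots,\beta_r)\in\B^r$ carries over unchanged, and the choice of branch in $a_t$ is dictated precisely by whether $\pi_t(v_i)=\pi_t(v_j)$ or not. There is essentially no obstacle beyond the routine spectral computation for $K_m$; the one minor cosmetic point is verifying that the two equivalent forms of the diagonal case agree, which the factoring above settles.
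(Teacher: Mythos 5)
Your proposal is correct and follows essentially the same route as the paper: apply Theorem \ref{walksNEPS} with $G_t=K_{m_t}$ and substitute the closed form of $w_{K_m}(\ell,u,v)$, which the paper simply quotes as well known and you derive via the spectral decomposition of $J-I$. The computation and the algebraic identification of the diagonal case both check out.
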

 \begin{proof}
 	It is enough to find $w_{K_m}(r,w_i,w_j)$, where $K_m$ is the complete graph with $m$ vertices. It is well known that
 	$$w_{K_m}(r,w_i,w_j)=
 	\begin{cases}
 		\frac{m-1}{m}\big((m-1)^{r-1}-(-1)^{r-1}\big) & \quad \text{if } w_i=w_j, \\[2mm]
 		\frac{1}{m}\big((m-1)^{r}-(-1)^{r}\big) & \quad\text{if } w_i\neq w_j.		 
 	\end{cases}$$
	Thus, the result follows from Theorem \ref{walksNEPS}.
 \end{proof}
\begin{exam}
	Let $K_3$ and $K_4$ be the complete graphs of $3$ and $4$ vertices respectively and let $G_1=\mathrm{NEPS}(K_{3}\times K_{4},\mathcal{B}_1)$ and $G_2=\mathrm{NEPS}(K_{3}\times K_{4},\mathcal{B}_2)$ with $\mathcal{B}_1=\{(1,1)\}$ and $\mathcal{B}_2=\{(1,0),(0,1)\}$. 
	
	Clearly, $\mathcal{B}_{1}^r$ only contains the element $\beta=(\beta_{1},\ldots,\beta_r)$ such that $\beta_i=\beta_j=(1,1)$ for all $i,j\in\{1,\ldots,r\}$, i.e\@ we have that $\beta_{1t}+\cdots+\beta_{rt}=r$ for $t=1,2$. On the other hand, $\mathcal{B}_{2}^{r}$ contains all the elements $\beta=(\beta_{1},\ldots,\beta_r)$ such that   
	$\beta_{1t}+\cdots+\beta_{rt}=\ell_t$ for $t=1,2$ and $\ell_i$'s satisfying $\ell_1+\ell_2=r$. 
	By Corollary \ref{walksNEPScomp} we have that  
	$$w_{G_1}(r,v,v)=(\tfrac{1}{2})(2^{r-1}-(-1)^{r-1})(3^{r-1}-(-1)^{r-1})=\tfrac{6^{r-1}+(-1)^{r}(2^{r-1}+3^{r-1})+1}{2},$$
	$$w_{G_2}(r,v,v)=\sum_{\ell=0}^{r}\tbinom{r}{\ell}(\tfrac{1}{2})(2^{\ell-1}-(-1)^{\ell-1})(3^{r-\ell-1}-(-1)^{r-\ell-1})$$
	for any vertex $v\in V(G)$.
\end{exam}

\section{Main results}

Let $\B=\{e_1,\ldots, e_n\}$, where $e_i$ is the $n$-tuple with $1$ in the position $i$ and zeros in the remainning positions. If $G$ is the NEPS  of the graphs $G_1,\ldots,G_n$ with basis $\B$, then $G=G_1+\cdots+G_n$ is the \textit{sum} of $G_t$ (cartesian product of graph). In this case we have the following result.
\begin{prop}\label{Cartform}
Let $G=G_1+\cdots+G_n$. Then, we have that	
$$w_{G}(r,v_i,v_j)=\sum_{r_1+\cdots+r_n=r} \, \frac{r!}{r_{1}!\cdots r_{n}!}\prod_{t=1}^n w_{G_t}(r_t,\pi_{t}(v_i),\pi_{t}(v_j)),$$
where $\pi_t$ denote the projection of $V(G)$ over $V(G_t)$. 
\end{prop}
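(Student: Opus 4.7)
The plan is to apply Theorem \ref{walksNEPS} directly with the canonical basis $\B=\{e_1,\ldots,e_n\}$ and then reorganize the resulting sum by collecting terms according to how often each basis vector appears in the sequence $(\beta_1,\ldots,\beta_r)$. The starting point is the identity
$$w_{G}(r,v_i,v_j)=\sum_{(\beta_1,\ldots,\beta_r)\in\B^r}\,\prod_{t=1}^{n} w_{G_t}(\beta_{1t}+\cdots+\beta_{rt},\pi_t(v_i),\pi_t(v_j)),$$
which is available because the hypotheses of Theorem \ref{walksNEPS} are met.

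Next, I would exploit the fact that each $\beta_\ell\in\B$ has exactly one nonzero coordinate. Hence a sequence $(\beta_1,\ldots,\beta_r)\in\B^r$ is in bijection with a function $\phi\colon\{1,\ldots,r\}\to\{1,\ldots,n\}$ via $\phi(\ell)=t \iff \beta_\ell=e_t$. Under this identification, the coordinate sum $\beta_{1t}+\cdots+\beta_{rt}$ is precisely $|\phi^{-1}(t)|$, the number of indices $\ell$ where $e_t$ was chosen. Setting $r_t:=|\phi^{-1}(t)|$, we have $r_1+\cdots+r_n=r$.

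The crucial observation is that the factor $\prod_{t=1}^n w_{G_t}(\beta_{1t}+\cdots+\beta_{rt},\pi_t(v_i),\pi_t(v_j))$ depends on the sequence only through the tuple $(r_1,\ldots,r_n)$. So I would group the sum by the composition $(r_1,\ldots,r_n)$ of $r$ and count how many sequences give rise to a prescribed composition: this count is exactly the multinomial coefficient $\frac{r!}{r_1!\cdots r_n!}$, since we are choosing which $r_1$ positions receive $e_1$, which $r_2$ positions receive $e_2$, and so on. Substituting yields
$$w_G(r,v_i,v_j)=\sum_{r_1+\cdots+r_n=r}\frac{r!}{r_1!\cdots r_n!}\prod_{t=1}^n w_{G_t}(r_t,\pi_t(v_i),\pi_t(v_j)),$$
as desired.

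The argument is essentially a combinatorial repackaging of Theorem \ref{walksNEPS} for the special basis, so I do not expect any serious obstacle. The only point requiring care is verifying that the bijection between $\B^r$ and functions $\phi\colon\{1,\ldots,r\}\to\{1,\ldots,n\}$ produces the multinomial coefficient when grouping by the composition; this is a standard counting fact. No additional properties of the factors $G_t$ are needed.
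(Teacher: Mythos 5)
Your proposal is correct and follows essentially the same route as the paper: specialize Theorem \ref{walksNEPS} to $\B=\{e_1,\ldots,e_n\}$, note that the summand depends only on the coordinate sums $r_t=\beta_{1t}+\cdots+\beta_{rt}$, and count the sequences realizing a given composition by the multinomial coefficient $\frac{r!}{r_1!\cdots r_n!}$. Your explicit bijection with functions $\phi\colon\{1,\ldots,r\}\to\{1,\ldots,n\}$ is a slightly more detailed way of stating the counting step the paper asserts directly.
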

\begin{proof}
Let $r$ be a non-negative integer. By Theorem \ref{walksNEPS} we have that
\begin{equation}\label{NWS}
w_{G}(r,v_i,v_j)=\sum_{(\beta_1,\ldots,\beta_{r})\in \B^{r}}\, \prod_{t=1}^{n} w_{G_t}(\beta_{1t}+\cdots+\beta_{rt},\pi_{t}(v_i),\pi_{t}(v_j)),
\end{equation}
where $\B=\{e_1,\ldots, e_n\}$.
Notice that if $(\beta_1,\ldots,\beta_r)\in \B^r$, then 
$$\beta_1+\cdots+\beta_{r}=(r_1,r_2,\ldots,r_n)\in(\mathbb{Z}_{\ge 0})^{n}\qquad \text{with} \qquad r_1+r_2+\cdots+r_n=r.$$ 
Moreover, there exist a number $\frac{r!}{r_{1}!\cdots r_{n}!}$ of $r$-tuples $(\beta_1,\ldots,\beta_r)$'s with $\beta_1+\cdots+\beta_{r}=(r_1,r_2,\ldots,r_n)$. Therefore we set \eqref{NWS} as we wanted.
\end{proof}

Recall that the Hamming graph $H(n,q)$ is the graph with vertex set all the $n$-tuples with entries from a set $\Delta$ of size $q$, and two $n$-tuples are neighbors if and only if they differ in exactly one entry. 
It is known that $H(n,q)$ is the $n$-sum of the complete graph $K_q$. Therefore, we have that
\begin{equation}\label{Walk Hamming}
w_{H(n,q)}(r,v_i,v_j)=\sum_{r_1+\cdots+r_n=r} \, \frac{r!}{r_{1}!\cdots r_{n}!}\prod_{t=1}^n a_{t}(v_i,v_j),
\end{equation}
where $r_t\ge 0$ for all $t=1,\ldots,n$ and  
\begin{equation}
a_{t}(v_{i},v_j)=\begin{cases}
\frac{q-1}{q}((q-1)^{r_t-1}-(-1)^{r_t-1}) & \quad\text{if } \pi_{t}(v_i)=\pi_t(v_j), \\[1mm]
\frac{1}{q}((q-1)^{r_t}-(-1)^{r_t}) & \quad\text{if } \pi_{t}(v_i)\neq\pi_t(v_j). 
\end{cases}
\end{equation}

In \cite{LP}, the authors caracterized all  generalized Paley graphs which are Hamming graphs. More precisely, they showed that $\G(k,p^m)$ is a Hamming graph if and only if $u=b(p^{ a}-1)$ for some divisor $b>1$ such that $m=ab$. 

Also if $\omega$ is a primitive element of $\ff_{p^m}$, then the set $\{1,\,\omega^k,\,\omega^{2k},\ldots,\,\omega^{(b-1)k}\}$ is a basis of $\ff_{p^m}$ as $\ff_{p^{a}}$-vector space, then $$x=\sum_{i=0}^{b-1} c_i \,\omega^{i\,k}\mapsto [x]=(c_0,\,c_1,\ldots,\,c_{b-1})\in (\ff_{p^{a}})^b$$ and hence we have the isomorphism (see \cite{LP})
\begin{equation}\label{iso GP=HAM}
\G(k,p^m)\cong H(b, p^{a}).
\end{equation}

In this case, we have the following result.
\begin{prop}\label{propwalks}
	Let $p$ be a prime and let $m,k$ be positive integers such that $k\mid p^m-1$. 
	If $u=\frac{p^m-1}{k}=b(p^{a}-1)$ with $b>1$ and $m=ab$, then  
	\begin{equation}
	w_{\G(k,p^m)}(r,x,y)=w_{H(b,p^{a})}(r,[x],[y])=\sum_{r_1+\cdots+r_b=r} \, \frac{r!}{r_{1}!\cdots r_{b}!}\prod_{i=1}^b a_{i}(x,y),
	\end{equation} where $r_i\ge 0$ for all $i=1,\ldots,b$ and  
	$$a_{i}(x,y)=\begin{cases}
	\frac{p^{a}-1}{p^{a}}((p^{a}-1)^{r_i-1}-(-1)^{r_i-1}) & \quad\text{if } [x]_i=[y]_i, \\[1mm]
	\frac{1}{p^{a}}((p^{a}-1)^{r_i}-(-1)^{r_i}) & \quad\text{if } [x]_i\neq[y]_i. 
	\end{cases}$$ 
	where $[x]_i$ denotes the $i$-th coordinate of the vector $[x]\in (\ff_{p^{a}})^b$, respect to the $\ff_{p^{a}}$-base  $\{1,\,\omega^k,\,\omega^{2k},\ldots,\,\omega^{(b-1)k}\}$.
\end{prop}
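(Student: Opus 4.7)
The proof is essentially an assembly of results already established in the paper, so my plan is to make that assembly explicit and identify the single input that needs to be cited rather than derived.

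\textbf{Step 1: Transfer the walk count via the isomorphism.} Under the hypothesis $u = \tfrac{p^m-1}{k} = b(p^a-1)$ with $b > 1$ and $m = ab$, the result of Lim--Praeger recalled in \eqref{iso GP=HAM} gives a graph isomorphism $\G(k,p^m) \cong H(b, p^a)$ sending $x \mapsto [x] = (c_0, c_1, \ldots, c_{b-1})$, where the $c_i$'s are the coordinates of $x$ in the $\ff_{p^a}$-basis $\{1, \omega^k, \ldots, \omega^{(b-1)k}\}$. Since graph isomorphisms preserve the number of walks of any length between corresponding vertices, I immediately get
\[
w_{\G(k,p^m)}(r, x, y) = w_{H(b, p^a)}(r, [x], [y]).
\]
This reduces the problem entirely to computing walks in a Hamming graph.

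\textbf{Step 2: Apply the Hamming walk formula.} The Hamming graph $H(b, p^a)$ is, by the remark preceding \eqref{Walk Hamming}, the $b$-fold sum (cartesian product) of the complete graph $K_{p^a}$. Thus Proposition \ref{Cartform}, applied with $G_t = K_{p^a}$ and $n = b$, yields
\[
w_{H(b,p^a)}(r, [x], [y]) = \sum_{r_1 + \cdots + r_b = r} \frac{r!}{r_1! \cdots r_b!} \prod_{i=1}^{b} w_{K_{p^a}}(r_i, \pi_i([x]), \pi_i([y])),
\]
with $r_i \geq 0$, where $\pi_i$ projects onto the $i$-th factor; by construction $\pi_i([x]) = [x]_i$.

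\textbf{Step 3: Substitute the closed form for walks in $K_{p^a}$.} Using the well-known expression for walks in a complete graph recorded in the proof of Corollary \ref{walksNEPScomp}, namely
\[
w_{K_q}(s, w_i, w_j) = \begin{cases} \tfrac{q-1}{q}\bigl((q-1)^{s-1} - (-1)^{s-1}\bigr) & \text{if } w_i = w_j,\\[1mm] \tfrac{1}{q}\bigl((q-1)^{s} - (-1)^{s}\bigr) & \text{if } w_i \neq w_j,\end{cases}
\]
with $q = p^a$ and $s = r_i$, the factor $w_{K_{p^a}}(r_i, [x]_i, [y]_i)$ is exactly the $a_i(x,y)$ defined in the statement. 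Combining Steps 1--3 gives the claimed identity.

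\textbf{Expected difficulty.} There is no real obstacle: all three inputs (the Lim--Praeger isomorphism, Proposition \ref{Cartform}, and the complete-graph walk formula) are already in place, and they chain cleanly. The only subtlety worth flagging in the write-up is the convention that the projection $\pi_i$ commutes with the coordinate map $x \mapsto [x]$, so that \textquotedblleft$\pi_i(v_i)$\textquotedblright{} in Proposition \ref{Cartform} genuinely corresponds to \textquotedblleft$[x]_i$\textquotedblright{} here; once that identification is made explicit, the formula falls out term by term.
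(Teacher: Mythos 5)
Your proof is correct and follows exactly the route the paper intends: the paper states this proposition without a separate proof precisely because it is the combination of the isomorphism \eqref{iso GP=HAM}, the Hamming walk formula \eqref{Walk Hamming} obtained from Proposition \ref{Cartform}, and the complete-graph walk count from Corollary \ref{walksNEPScomp}. Your write-up simply makes that assembly explicit, including the correct observation that isomorphisms preserve walk counts and that $\pi_i([x])=[x]_i$.
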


As a direct consequence of the Theorem \ref{walksolthm} and Proposition \ref{propwalks} we obtain the following Proposition.
\begin{prop}\label{coromaster}
	Let $p$ be a prime and let $a,b$ be positive integers such that $b>1$. If $k=\frac{p^{ab}-1}{b(p^a-1)}$ is integer 
	and $N_r$ denotes of solutions $(x_1,\ldots,x_r)$ in $(\ff_{p^{ab}}^*)^r$ to the diagonal equation 
	$x_1^{k}+\cdots+x_{r}^k= \alpha$, then  
	\begin{equation}\label{N}
	N_r=k^{r}\sum_{r_1+\cdots+r_b=r} \, \frac{r!}{r_{1}!\cdots r_{b}!}\prod_{i=1}^b a_{i}(\alpha),
	\end{equation} 
	where $r_i\ge 0$ for all $i=1,\ldots,b$ and  
	$$a_{i}(\alpha)=
	\begin{cases}
	\frac{p^{a}-1}{p^{a}}((p^{a}-1)^{r_i-1}-(-1)^{r_i-1}) & \quad\text{if } [\alpha]_i=0, \\[1mm]
	\frac{1}{p^{a}}((p^{a}-1)^{r_i}-(-1)^{r_i}) & \quad\text{if } [\alpha]_i\neq 0.
	\end{cases}$$
	where $[\alpha]_i$ denotes the $i$-th coordinate of the vector $[\alpha]\in (\ff_{p^{a}})^b$, respect to the $\ff_{p^{a}}$-base  $\{1,\,\omega^k,\,\omega^{2k},\ldots,\,\omega^{(b-1)k}\}$ of $\ff_{p^{ab}}$.
\end{prop}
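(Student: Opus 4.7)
The plan is to observe that Proposition \ref{coromaster} is essentially a bookkeeping combination of the two preceding results: Theorem \ref{walksolthm} translates solution counts of the diagonal equation into walk counts on $\Gamma(k,p^{ab})$, and Proposition \ref{propwalks} gives an explicit formula for those walk counts via the Hamming graph identification. So the whole proof should amount to applying Theorem \ref{walksolthm} with the right choice of basepoint and substituting in Proposition \ref{propwalks}.

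First I would set $x=0$ and $y=\alpha$ in Theorem \ref{walksolthm} (with $m=ab$ and $s=r$). This immediately gives
\begin{equation*}
N_r = k^r\, w_{\Gamma(k,p^{ab})}(r, 0, \alpha).
\end{equation*}
Next, I would apply Proposition \ref{propwalks} to rewrite $w_{\Gamma(k,p^{ab})}(r, 0, \alpha)$ as $w_{H(b,p^a)}(r, [0], [\alpha])$, expanded as the stated multinomial sum of products of the quantities $a_i(0,\alpha)$. The only thing to check is the translation of the condition in the case split: since $[0]=(0,\dots,0)$ under the coordinate map $x \mapsto [x]$, we have $[0]_i=[\alpha]_i$ iff $[\alpha]_i=0$, and $[0]_i \neq [\alpha]_i$ iff $[\alpha]_i \neq 0$. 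Hence the $a_i(0,\alpha)$ of Proposition \ref{propwalks} becomes the $a_i(\alpha)$ defined in the statement.

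Multiplying by $k^r$ and combining gives the claimed formula \eqref{N}. There is no real obstacle here beyond the notational check that the coordinate map $x \mapsto [x]$ is linear (so that $[\alpha - 0] = [\alpha]$) and that $[0]=0$, which is immediate from the definition of the basis $\{1,\omega^k,\dots,\omega^{(b-1)k}\}$ used in \eqref{iso GP=HAM}. Accordingly, I would present this as a short two-step derivation rather than a substantive argument, since all the work has already been done in Theorem \ref{walksolthm} and Proposition \ref{propwalks}.
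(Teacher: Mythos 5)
Your proposal is correct and matches the paper's intent exactly: the paper presents Proposition \ref{coromaster} as a direct consequence of Theorem \ref{walksolthm} (applied with $x=0$, $y=\alpha$, $s=r$, $m=ab$) and Proposition \ref{propwalks}, with the only check being that $[0]_i=[\alpha]_i$ if and only if $[\alpha]_i=0$. Your two-step derivation is precisely the argument the paper leaves implicit.
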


As a direct consequence of the last proposition and Remark \ref{rem imp} we obtain our main result.
\begin{thm}
Let $p$ be a prime and let $a,b$ be positive integers such that $b>1$. If $k=\frac{p^{ab}-1}{b(p^a-1)}$ is integer 
and $M_s$ denotes of solutions $(x_1,\ldots,x_s)$ in $(\ff_{p^{ab}})^s$ to the diagonal equation 
$x_1^{k}+\cdots+x_{s}^k= \alpha$, then 
\begin{equation}
M_s=
\begin{cases}
\sum_{\ell=1}^r\tbinom{s}{r}N_{r} \qquad \text{if } \alpha\ne 0,\\
1+\sum_{i=1}^r\tbinom{s}{r}N_{r} \quad \text{if } \alpha= 0, 
\end{cases}
\end{equation}
where $N_r$ is given by \eqref{N}.
\end{thm}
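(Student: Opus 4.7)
The plan is to derive this statement as an immediate corollary of Proposition \ref{coromaster} by means of the support-decomposition recorded in Remark \ref{rem imp}. All the substantial work has already been carried out earlier in the paper: Theorem \ref{walksolthm} converts the counting problem into walk-counting on the GP-graph, Theorem \ref{walksNEPS} (via Corollary \ref{walksNEPScomp}) evaluates the number of walks in a NEPS of complete graphs, and the Lim--Praeger identification \eqref{iso GP=HAM} feeds the concrete Hamming-graph description into Proposition \ref{coromaster}. What remains is a short combinatorial bookkeeping over the size of the support of a solution.

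Concretely, I would fix $\alpha\in\ff_{p^{ab}}$ and stratify the solutions $(x_1,\ldots,x_s)\in(\ff_{p^{ab}})^s$ of the equation $x_1^k+\cdots+x_s^k=\alpha$ by $r=\#\{i : x_i\neq 0\}$. For each $r\ge 1$ and each $r$-subset $I\subseteq\{1,\ldots,s\}$, the tuples with support exactly $I$ are in bijection with the solutions in $(\ff_{p^{ab}}^*)^r$ of the shorter equation $y_1^k+\cdots+y_r^k=\alpha$, since the zero entries contribute nothing to the sum. By Proposition \ref{coromaster} there are $N_r$ such solutions, and there are $\binom{s}{r}$ choices of support, giving a total contribution $\binom{s}{r}N_r$ from the stratum of support size $r$.

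The $r=0$ stratum consists of the all-zero tuple alone, which solves the equation if and only if $\alpha=0$; it contributes the additional $+1$ exactly in that case. Summing the strata for $r=1,\ldots,s$ and adding the indicator of $\{\alpha=0\}$ yields the two-case formula, which is precisely the aggregation rule already noted in Remark \ref{rem imp}.

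There is essentially no obstacle here, as the present theorem is the final assembly of results from the previous sections. The only point worth double-checking is that the value $N_r$ produced by Proposition \ref{coromaster} depends on $\alpha$ only through its coordinate vector $[\alpha]\in(\ff_{p^a})^b$, so that the same $N_r$ may be inserted into each of the $\binom{s}{r}$ supports uniformly; this is guaranteed by the Hamming-graph coordinate presentation used in the derivation of \eqref{N}.
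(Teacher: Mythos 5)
Your proposal is correct and follows exactly the paper's own route: the theorem is obtained by combining Proposition \ref{coromaster} with the support-size decomposition of Remark \ref{rem imp}, which is precisely the stratification by the number of nonzero coordinates that you carry out. Your write-up is in fact slightly more explicit than the paper's (which simply cites the remark), and it also implicitly corrects the typographical slips in the stated summation indices by summing $r$ from $1$ to $s$.
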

\begin{rem}
	Clearly, the hypothesis $k$ to be integer is equivalent to $b\mid \frac{p^{ab}-1}{p^a-1}$. 
	This condition was recently studied in \cite{PV}. 
	More specifically, if we put $x=p^a$, then $b\mid \frac{p^{ab}-1}{p^a-1}$ in the following cases:
	\begin{enumerate}[$(a)$]
		\item If $b=r$ is a prime different from $p$ 
		and $x\equiv 1 \pmod r$.
		
		\item If $b=2r$ with $r$ an odd prime,  $x$ coprime with $b$ and $x\equiv \pm1 \pmod r$.
		
		\item If $b=r r'$ with $r<r'$ odd primes such that $r \nmid r'-1$ and $x\equiv 1 \pmod{rr'}$.
		
		\item If $b=r_1 r_2 \cdots r_\ell$ with $r_1 < r_2 < \cdots < r_\ell$ primes different from $p$ with $x\equiv 1 \pmod{r_1}$ and $x^{b/r_i} \equiv 1 \pmod{r_i}$ for $i=2, \ldots,\ell$.
		
		\item If $b=r^t$ with $r$ prime such that $ord_{b}(x)=r^h$ for some $0\le h<t$.
		
		\item If $b = r_1^{t_1}	\cdots r_\ell^{t_\ell}$ with $r_1 < \cdots < r_\ell$ primes different from $p$ where $ord_{r_{i}^{t_i}}(x)=r_{i}^{h_i}$ with  $0\le h_i\le t_{i}-1$ for all $i$.
	\end{enumerate}
\end{rem}

\end{document}